\newtheorem{thm}{Theorem}
\newtheorem{rek}[thm]{Remark}
\numberwithin{equation}{section}
\newtheorem{lem}{Lemma}[section]
\newtheorem{prop}{Proposition}[section]
\newcommand\be{\begin{equation}}
\newcommand\ee{\end{equation}}
\newcommand\bea{\begin{eqnarray}}
\newcommand\eea{\end{eqnarray}}
\newcommand\bi{\begin{itemize}}
\newcommand\ei{\end{itemize}}
\newcommand\ben{\begin{enumerate}}
\newcommand\een{\end{enumerate}}
\newcommand\bc{\begin{center}}
\newcommand\ec{\end{center}}
\newcommand\ba{\begin{array}}
\newcommand\ea{\end{array}}
\newcommand{\Keywords}[1]{\par\noindent {\small{\em Keywords\/}: #1}}
\newcommand{\AMS}[1]{\par\noindent {\small{\em AMS subject classification\/}: #1}}
\begin{document}
%
% paper title
% can use linebreaks \\ within to get better formatting as desired
\title{Stability for second-order chaotic sigma-delta quantization}

\author{Lauren Bandklayder\thanks{L. Bandklayder is with the Department
of Mathematics, New York University, e-mail: lab505@nyu.edu.} %  
\hspace{2mm}
        and~Rachel~Ward%
\thanks{R. Ward is with the Department of Mathematics, University of Texas at Austin, rward@math.utexas.edu.}% <-this %
}

\maketitle

\begin{abstract}
%\boldmath
We prove that second-order (double-loop) chaotic sigma-delta schemes are \emph{stable}: within a certain parameter range, all state variables of the system are guaranteed to remain uniformly bounded. To our knowledge this is the first general stability result for chaotic sigma-delta schemes of order greater than one.  Invariably as the amount of expansion added to the system is increased, the dynamic range of the input must decrease for stability to be guaranteed.   We give explicit bounds on this trade-off and verify through numerical simulation that these bounds are near-optimal.
\end{abstract}
\begin{center}
\Keywords{sigma-delta quantization, stability, bandlimited, double-loop, discrete dynamical system}
\AMS{94C99, 94A12, 93D99, 37N35}
\end{center}

\section{Introduction}
Analog-to-digital conversion is the process of representing a continuous signal by a bitstream, or a sequence consisting of $+1 $'s and $-1$'s.  
Sigma-delta quantization schemes often preferred in practice for their simplicity and robustness.  The standard algorithms however tend to produce periodic output, causing audible idle tones.   One approach towards breaking up such periodicities is to modify the parameters in the standard sigma-delta scheme and implement instead \emph{chaotic} sigma-delta quantization.  However, rigorous results about the stability of the double-loop chaotic sigma-delta scheme have remained elusive.  It is our goal in this paper to prove that under the explicit parameter conditions which we will derive, all state variables in chaotic sigma-delta recursions are guaranteed to remain bounded. 

 {\bf\normalsize Background on analog-to-digital conversion.}
 Many signals (e.g. sound, light, and pressure) are naturally produced as bandlimited signals, being well-approximated locally as linear combinations of sines and cosines not exceeding a maximal frequency, or bandwidth.  As these signals are continuous, they must be sampled and quantized in order to be stored in digital format.  
 Recall that a bounded real-valued function $f$ is bandlimited and has bandwidth $\Omega$ if its Fourier transform $\widehat{f}(\omega) = \int_{-\infty}^{\infty} f(t) \exp{(-2\pi i \omega t)} dt$
 vanishes outside the bounded interval $[-\Omega, \Omega]$. The classic sampling theorem gives rigorous bounds on the sampling rate neessary for accurate reconstruction; namely, it states that a bandlimited function can be recovered from its values at evenly-spaced sampling points if the sampling frequency is at least twice the bandwidth.  For functions $f$ with bandwidth $\Omega = 1/2$ (normalized for simplicity), the sampling theorem states that for any $T > 1$,
\begin{equation}
f(t) = \frac{1}{T} \sum_{n \in \mathbb{Z}} f \Big( \frac{n}{T } \Big) g \Big( t - \frac{n}{T} \Big),
\label{shannonsamp}
\end{equation}  
where $g$ is any function (a.k.a. low-pass filter) whose Fourier transform is unity inside the interval $[-1, 1]$ and zero outside of an interval $[-T_0, T_0]$ with $T_0 \leq T$ \cite{DD03}. 

For a bandlimited function, then, analog-to-digital conversion then amounts to mapping the sequence of samples $f_n = f(\frac{n}{T})$ to a sequence of discrete values $q_n$ in such a way that the original function can be reconstructed to within a small error at a later time via  
\begin{equation}
\label{ftilde}
\widetilde{f}(t) = \frac{1}{T} \sum_{n \in \mathbb{Z}} q_n g \Big( t - \frac{n}{T} \Big) 
\end{equation}  
In pulse code modulation schemes, $q_n$ is taken to be the binary expansion of $f_n$, truncated to its first $K$ bits.  sigma-delta $(\Sigma \Delta)$ quantization is an alternative quantization procedure, popular in practice for its robustness and ease of implementation.  For a given input sequence, a $K$th-order $\Sigma \Delta$ scheme dynamically generates a sequence of discrete output in such a way that at each step, the quantization error $f_n - q_n$ equals the $K$th-order difference of a bounded state variable $v = (v_n)_{n \in \mathbb{Z}}$.  Informally, this means that the quantization error is pushed to high-frequency bands and subsequently suppressed by the low-pass filter $g$.    The following proposition provides rigorous bounds on the accuracy of $K$th order $\Sigma \Delta$ schemes; we refer the reader to \cite{DD03} for a detailed proof.
\begin{prop}
\label{standardstable}
Suppose $f  \in L^2(\mathbb{R})$ is bounded and has bandwidth $\Omega = 1/2$.  Let $f_n = f(\frac{n}{T})$ be  sampled at rate $T > 1$, and suppose $g$ in \eqref{shannonsamp} is chosen to have absolutely integrable $K$th derivative $\int_{-\infty}^{\infty} |g^{(K)}(t)|dt = \| g^{(K)} \|_{L^1} < \infty$.  Suppose $(v_n)_{n \in \mathbb{Z}}$ satisfies $|v_n | \leq v_{max}$ and is such that
\begin{equation}
\label{diff}
f_n - q_n = \Delta^K(v)_n
\end{equation}
for some sequence $(q_n)_{n \in \mathbb{Z}}$ with $|q_n| \leq 1$.  Here $\Delta^K(v)_n := \sum_{\ell=0}^K (-1)^{\ell} {K \choose\ell} v_{n-\ell}$ is the $K$th order difference operator.  Then the quantization error is uniformly bounded by
\begin{equation}
\label{standardacc}
\left| f(t) - \frac{1}{T} \sum_{n \in \mathbb{Z}} q_n g \Big(t - \frac{n}{T}\Big)  \right| \leq v_{max}\cdot  \| g^{(K)} \|_{L^1} \cdot  T^{-K}.
\end{equation}
\end{prop}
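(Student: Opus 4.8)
The plan is to reduce the reconstruction error to a single series in which the finite-difference structure of \eqref{diff} can be used directly. Because $f$ is bandlimited with $\Omega=1/2$ and sampled above the Nyquist rate, the identity \eqref{shannonsamp} reconstructs $f$ exactly (this is the content of the cited sampling theorem, valid pointwise since $f$ is bounded and $g$ is a suitable low-pass filter). Subtracting the approximate reconstruction built from $(q_n)$ and then invoking \eqref{diff} gives
\begin{equation}
f(t)-\frac{1}{T}\sum_{n\in\Z}q_n\,g\!\left(t-\frac{n}{T}\right)=\frac{1}{T}\sum_{n\in\Z}(f_n-q_n)\,g\!\left(t-\frac{n}{T}\right)=\frac{1}{T}\sum_{n\in\Z}\Delta^K(v)_n\,g\!\left(t-\frac{n}{T}\right).
\end{equation}
Everything now rests on estimating this last sum, and the only information about $(v_n)$ that I intend to use is the uniform bound $|v_n|\le v_{max}$.

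Next I would transfer the difference operator off of $v$ and onto the samples of $g$. Writing $a_n:=g(t-n/T)$, the adjoint relation $\sum_n \Delta^K(v)_n\,a_n=\sum_n v_n\,\Delta^{*K}(a)_n$ holds, where $\Delta^{*K}(a)_n=\sum_{\ell=0}^K(-1)^\ell\binom{K}{\ell}a_{n+\ell}$ is the forward $K$th difference; this follows from the single index shift $m=n-\ell$ inside the double sum, which is a pure reindexing (no boundary terms) and is justified a posteriori by the absolute convergence that the final estimate confirms. The error therefore equals
\begin{equation}
\frac{1}{T}\sum_{n\in\Z}v_n\sum_{\ell=0}^K(-1)^\ell\binom{K}{\ell}g\!\left(t-\frac{n+\ell}{T}\right),
\end{equation}
and the inner sum is precisely the $K$th finite difference of $g$ at step $1/T$, evaluated at $x=t-n/T$.

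The crux is to convert this finite difference into $g^{(K)}$ and then to sum cleanly over $n$. Here I would use the integral representation of the $K$th finite difference obtained by applying the fundamental theorem of calculus $K$ times,
\begin{equation}
\sum_{\ell=0}^K(-1)^\ell\binom{K}{\ell}g\!\left(x-\frac{\ell}{T}\right)=\int_{[-1/T,0]^K}g^{(K)}\!\left(x+s_1+\cdots+s_K\right)\,ds_1\cdots ds_K.
\end{equation}
Taking absolute values, bounding $|v_n|\le v_{max}$, and applying Fubini, the key observation is a tiling argument: with $x=t-n/T$ and $s_2,\dots,s_K$ fixed, as $n$ ranges over $\Z$ and $s_1$ over $[-1/T,0]$ the quantity $-n/T+s_1$ sweeps out all of $\R$ exactly once, so the combined sum-and-integral over $(n,s_1)$ collapses to $\int_\R|g^{(K)}(\tau)|\,d\tau=\|g^{(K)}\|_{L^1}$. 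The remaining $K-1$ integrations each run over an interval of length $1/T$ and contribute a factor $T^{-(K-1)}$; together with the prefactor $1/T$ this yields exactly $v_{max}\,\|g^{(K)}\|_{L^1}\,T^{-K}$, which is \eqref{standardacc}.

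The step I expect to be most delicate is this last one: justifying the interchange of the infinite summation over $n$ with the $K$-fold integration and making the tiling bookkeeping airtight, since this is precisely where the lone power of $T$ in the prefactor combines with the $T^{-(K-1)}$ from the integration box to produce the advertised $T^{-K}$ decay. By contrast, the reindexing identity and the finite-difference integral formula are standard once the sign and index conventions and the decay of $g$ (ensured by $g^{(K)}\in L^1$ together with the bandlimiting of $g$) are pinned down.
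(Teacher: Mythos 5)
Your proof is correct. The paper itself does not prove this proposition---it defers entirely to the cited reference [DD03]---and your argument (exact reconstruction via the sampling theorem, summation by parts to move $\Delta^K$ onto the translates of $g$, the $K$-fold integral representation of the finite difference, and the tiling of $\mathbb{R}$ by the $s_1$-integration combined with the sum over $n$) is precisely the standard argument from that reference, with the Tonelli interchange legitimately justified since all quantities are nonnegative after taking absolute values.
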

The error estimate \eqref{standardacc} guarantees on the reconstruction accuracy of a $K$th order sigma-delta scheme.  We note that more refined error estimates can be found, for example, in \cite{nguyen}. However, in practice the accuracy must be balanced against increased instability and implementation costs associated to higher-order schemes.  In practice it is common to use as low as second-order sigma-delta schemes.

In the sequel, we will restrict focus to second-order or double-loop $\Sigma \Delta$ schemes which generate $1$-bit quantization sequences $q_n \in \{-1,1\}$.  For a given input sequence $(f_n)_{n \in \mathbb{Z}}$, one method for generating output $(q_n)_{n \in \mathbb{Z}}$ and state sequence $(v_n)_{n \in \mathbb{Z}}$ to  satisfy the second-order finite difference equation $f_n - q_n = \Delta^2(v)_n$ is to start with $u_0 = 0$, $v_0 = 0$, and iterate for $n \geq 1$  
\begin{eqnarray}
q_n &=& {\cal Q}(u_{n-1} + \gamma v_{n-1} ) \nonumber \\
u_n &=&  u_{n-1} + f_n - q_n \nonumber \\
v_n &=&  u_{n-1} +  v_{n-1} + f_n - q_n.
\label{mainrecurs}
\end{eqnarray}
(the symmetric recursion can be implemented for $n \leq -1$).
Here, $\gamma$ is a parameter to be specified, and the quantizer ${\cal Q}: \mathbb{R} \rightarrow \{-1,1\}$ is the \emph{signum} function:
\begin{equation}
\label{signum}
{\cal Q}(x) := \left\{ \begin{array}{cc}
 1, & x \geq 0 \nonumber \\
-1, &x < 0,
\end{array} \right.
\nonumber
\end{equation}
Note that when $\gamma = 1$, \eqref{mainrecurs} gives the state equations for the typical double-loop $\Sigma \Delta$ modulator, as in \cite{motamed}. 
For the error guarantees in Proposition \ref{standardstable} to hold, we must ensure that the state sequence $(v_n)_{n \in \mathbb{Z}}$ remains bounded.   This is what is meant by \emph{stability} of a quantization scheme.  Treating the system \eqref{mainrecurs} as a non-stationary discrete dynamical system, Yilmaz proved in \cite{OYstability} that if $|f_n | \leq \beta < 1$, and if $\gamma$ is chosen within a certain viable $\beta$-dependent range, then stability is ensured and the state sequence can be bounded explicitly: $|v_n | \leq C_{\beta}$.  Improved bounds were later provided by Zeng \cite{zeng}.

 {\bf\normalsize Chaotic $\Sigma \Delta$ quantization.}
A persistent problem with the $\Sigma \Delta$ quantization scheme \eqref{mainrecurs}, and for higher-order schemes as well, is the production of \emph{periodic} output sequences $( q_n )_{n \in \mathbb{Z}}$.  In audio signal processing, where $\Sigma \Delta$ quantization is widely used, such periodicities can produce spikes in the frequency spectrum of the reconstructed signal $\widetilde{f}$, and manifest as audible idle tones to the listener.  Several attempts have been made to modify the standard recursion \eqref{mainrecurs} to break up periodic output without sacrificing accuracy of the resulting reconstruction.  One suggested approach is to apply dither, or white noise, to the input $f_n$ before implementing the recursion \cite{ChaosDither}.  Another approach, not necessarily mutually exclusive to dithering, is to break periodicities by amplifying the state sequence at each iteration, considering instead
\begin{eqnarray}
q_n &=& {\cal Q}(u_{n-1} + \gamma v_{n-1} ) \nonumber \\
u_n &=&  \lambda_1 u_{n-1} + f_n - q_n \nonumber \\
v_n &=&  \lambda_1 u_{n-1} +  \lambda_2 v_{n-1} + f_n - q_n;
\label{chaosrecurs}
\end{eqnarray}
This system is called \emph{chaotic} if either $\lambda_1 > 1$ or $\lambda_2 > 1$, following several simulation  studies \cite{SDChaos, ChaosDither, feelychaos, motamed} which indicate the effectiveness of this scheme for breaking up periodicities in the output $(q_n)_{n \in \mathbb{Z}}$.  As we will recall in the Appendix, second-order accuracy $|f(t) - \widetilde{f}(t) | \leq C_{v_{max}} T^{-2}$ of the standard scheme is maintained by \eqref{chaosrecurs} as long as $\lambda_1, \lambda_2$ are not too large compared to the sampling rate $T$, and as long as the state sequence $(v_n)$ remains bounded.  While stability for chaotic single-loop $\Sigma \Delta$ schemes ($K=1$) has been shown in \cite{feelychua92}, and stability for the double-loop chaotic schemes \eqref{chaosrecurs} was verified in \cite{feely98, motamed} for \emph{constant} input $f_n \equiv \beta < 1$, rigorous stability results for general bounded input $|f_n| \leq \beta < 1$ have been absent.  Stability depends on the choice of parameters $\lambda_1, \lambda_2, \gamma, $ and $\beta$;  as noted in \cite{feelychaos}, certain parameters lead to instability. 

{\bf\normalsize Contribution of this work.}  We give an explicit parameter range over which the second-order chaotic $\Sigma \Delta$ recursion is guaranteed to be stable, $|v_n| \leq v_{max}$, independent of the sampling rate. We moreover provide explicit bounds on $v_{max}$ - such bounds are crucial because quantization inaccuracies arise if state variables exceed certain device-dependent limits.
 
\section{Main results}
Consider the one-parameter family of second-order chaotic $1$-bit $\Sigma \Delta$ recursions
\begin{eqnarray}
q_n &=& {\cal Q}(u_{n-1} + \gamma v_{n-1} ) \nonumber \\
u_n &=&  \lambda u_{n-1} + f_n - q_n \nonumber \\
v_n &=&  \lambda u_{n-1} +  v_{n-1} + f_n - q_n,
\label{recurs2}
\end{eqnarray}
with quantizer ${\cal Q}$ returns the sign of its input as before.  Then the following stability result holds:

\begin{thm}
\label{thm1}
Fix $\lambda \geq 1$ and fix $0 < \alpha < 1$ so that 
\begin{equation}
\label{beta}
\beta := \frac{\alpha(1-\alpha) - 2\sqrt{2}(1+\alpha)(\lambda-1)}{1-\alpha + 2\sqrt{2}(1+\alpha)(\lambda-1)} > 0.
\end{equation}
Consider a sequence $(f_n)_{n \in \mathbb{Z}}$ uniformly bounded by $\sup_{n} | f_n | \leq \beta < 1$ as input to the chaotic double-loop $\Sigma \Delta$ recursion \eqref{recurs2} with multiplier 
\begin{equation}
\label{g}
\gamma = \frac{1-\alpha}{1+\alpha}
\end{equation} 
If $u_0 = v_0 = 0$, then the generated state sequence $(u_n, v_n)$ remains bounded and, in particular, $| v_n | \leq \frac{2(1+\alpha)}{1-\alpha} + \frac{1-\alpha}{8}$ for all $n \in \mathbb{Z}$.
\end{thm}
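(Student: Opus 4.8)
The plan is to prove stability by the classical trapping-region method: exhibit a bounded set $R \subset \mathbb{R}^2$ in the state space, containing the initial state $(u_0,v_0)=(0,0)$, that is invariant under the one-step map defined by \eqref{recurs2}, and then read the bound on $v_n$ off the geometry of $R$. Once invariance is established, an induction on $n$ shows every orbit launched from the origin stays in $R$, giving $|v_n| \le v_{max}$ for all $n \ge 0$ (and by the symmetric recursion for $n \le 0$). The entire difficulty is thus concentrated in choosing $R$ and verifying that it is forward-invariant under the prescribed parameters.

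First I would simplify the recursion. Subtracting the $u$-update from the $v$-update in \eqref{recurs2} gives the clean identity $v_n = v_{n-1} + u_n$, so the second coordinate is merely a running sum of the first. This suggests tracking, in place of $(u_n,v_n)$, the pair $(u_n, s_n)$ where $s_n := u_n + \gamma v_n$ is exactly the quantity whose sign determines $q_{n+1}$. Using $\gamma v_{n-1} = s_{n-1} - u_{n-1}$ and $q_n = \mathrm{sgn}(s_{n-1})$, the system \eqref{recurs2} becomes the coupled piecewise-affine map
\be
u_n = \lambda u_{n-1} + f_n - q_n, \qquad s_n = \big[(1+\gamma)\lambda - 1\big] u_{n-1} + \big(s_{n-1} - (1+\gamma)q_n\big) + (1+\gamma) f_n,
\ee
with $q_n = \mathrm{sgn}(s_{n-1})$. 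The virtue of these coordinates is that the restoring term $s_{n-1} - (1+\gamma)q_n$ carries the sign of $s_{n-1}$ back into the update, so the quantizer feedback acts directly and favorably on the $s$-coordinate.

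Next I would propose a trapping region that is a parallelogram with one pair of sides along the decision line $u+\gamma v=0$, parametrized by two constants, and fix these constants by forcing invariance. The crux is that $\lambda \ge 1$ makes the $u$-coordinate non-contracting on its own — the naive estimate $|u_n| \le \lambda|u_{n-1}| + 1 + \beta$ is useless — so control of $u_n$ must come from the sign correlation between $u$ and the decision variable $s$. I would split into the cases $s_{n-1}\ge 0$ and $s_{n-1}<0$ and, in each, use a signed (not merely triangle-inequality) estimate showing that on the relevant part of $R$ the output $q_n=\mathrm{sgn}(s_{n-1})$ has precisely the sign needed to pull $u$ back toward $0$. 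The worst configuration is the corner of $R$ where $u_{n-1}$ is large and positive while $s_{n-1}$ is only marginally nonnegative (so $v_{n-1}$ is very negative): there the restoring push is weakest, and it is exactly this corner that dictates how small the dynamic range $\beta$ must be. Carrying out this corner estimate is the main obstacle, and I expect the Euclidean geometry of the parallelogram — the diagonal distance across $R$ to the decision line — to be where the factor $2\sqrt{2}$ in \eqref{beta} enters; the requirement that the two invariance inequalities admit a consistent solution under $|f_n|\le\beta$ should reduce to exactly the positivity condition \eqref{beta}.

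Finally, I would solve the resulting constraints with the prescribed $\gamma=(1-\alpha)/(1+\alpha)$, which forces the region's width in the $s$-direction plus its $u$-extent to equal $2$ up to a lower-order correction. Since $(u_0,s_0)=(0,0)\in R$, invariance and induction give $(u_n,s_n)\in R$ for all $n$, whence $|\gamma v_n| = |s_n - u_n|$ is controlled by the region and $|v_n| \le (\text{width}+\text{extent})/\gamma$, producing the main term $\frac{2(1+\alpha)}{1-\alpha}$. Replacing the crude one-step bound used to establish invariance by a sharper estimate of the maximal one-step increment over $R$ — a quadratic optimization whose maximum contributes the factor $\tfrac18$ — upgrades the additive constant to the stated $\frac{1-\alpha}{8}$, yielding $|v_n| \le \frac{2(1+\alpha)}{1-\alpha} + \frac{1-\alpha}{8}$.
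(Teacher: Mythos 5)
Your overall strategy---exhibit a forward-invariant bounded region $R$ containing the origin and read the bound on $v_n$ off its geometry---is exactly the paper's strategy, and your reduction $v_n = v_{n-1}+u_n$ is correct. But the proposal leaves the entire content of the theorem unproved, and the specific region you choose is the wrong shape. You defer "the corner estimate" as "the main obstacle" without carrying it out; that estimate \emph{is} the theorem. More importantly, a parallelogram with sides along the decision line $u+\gamma v=0$ cannot be invariant here in any tight sense: under a fixed output sign the one-step map is $(u,v)\mapsto(u-\delta,\,v+u-\delta)$ (after absorbing $\lambda$), so an orbit that stays on one side of the decision line traces a \emph{parabolic} arc in the $(u,v)$-plane ($v$ is a quadratic function of the step count while $u$ is linear). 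Any tight invariant region must therefore have parabolically curved upper and lower boundaries, which is precisely what the paper uses: $R=\{B_2(u)\le v\le B_1(u)\}$ with $B_1,B_2$ piecewise quadratic with curvatures $1/(2\delta_L)$ and $1/(2\delta_H)$, $\delta_{L,H}=1\mp\alpha$. The additive term $\tfrac{1-\alpha}{8}$ in the conclusion is not a "maximal one-step increment" correction as you guess; it is the height of the parabolic cap of $B_1$ above the level $C=\tfrac{2(1+\alpha)}{1-\alpha}$ (the vertex of $-\tfrac{1}{2\delta_L}u^2+\tfrac12 u + C$ sits at $v=C+\delta_L/8$). A straight-edged region cannot produce this constant, and your attributions of both $\tfrac18$ and $2\sqrt2$ are reverse-engineered from the answer rather than derived.

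The second missing idea is the paper's actual mechanism for handling $\lambda>1$, of which your outline has no analogue. The paper observes that $S^{\delta}_l(\lambda u,v)=S^{\delta'}_l(u,v)$ with $\delta'=\delta-(\lambda-1)u$: the chaotic iteration is the \emph{standard} ($\lambda=1$) iteration with a perturbed quantization error. Invariance for $\lambda>1$ then follows from Yilmaz's $\lambda=1$ result (Proposition \ref{Yilmaz}) provided $\delta'$ stays in $[\delta_L,\delta_H]$, which is arranged by shrinking the input range to $\beta=(\alpha-\varepsilon)/(1+\varepsilon)$ and capping the region size $C\le \varepsilon^2\delta_L/(2(\lambda-1)^2\delta_H)$. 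The tension between this upper bound on $C$ and the lower bound $C\ge 2\delta_H/\delta_L$ needed for invariance is what produces the admissible range of $\varepsilon$, the constraint $\lambda\le 1+\tfrac{\alpha(1-\alpha)}{2(1+\alpha)}$, and ultimately the formula \eqref{beta}. Without this reduction (or a substitute for it), your case analysis on $\mathrm{sgn}(s_{n-1})$ has no way to neutralize the expansion $\lambda u_{n-1}$, and the "consistent solution" you hope reduces to \eqref{beta} is never exhibited.
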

Note that when $\lambda = 1$, i.e. we implement the standard recursion, $\beta = \alpha$ and we recover the stability results of Yilmaz, \eqref{Yilmaz}.

\begin{figure}[h!]
\begin{center}
\includegraphics[width=6cm]{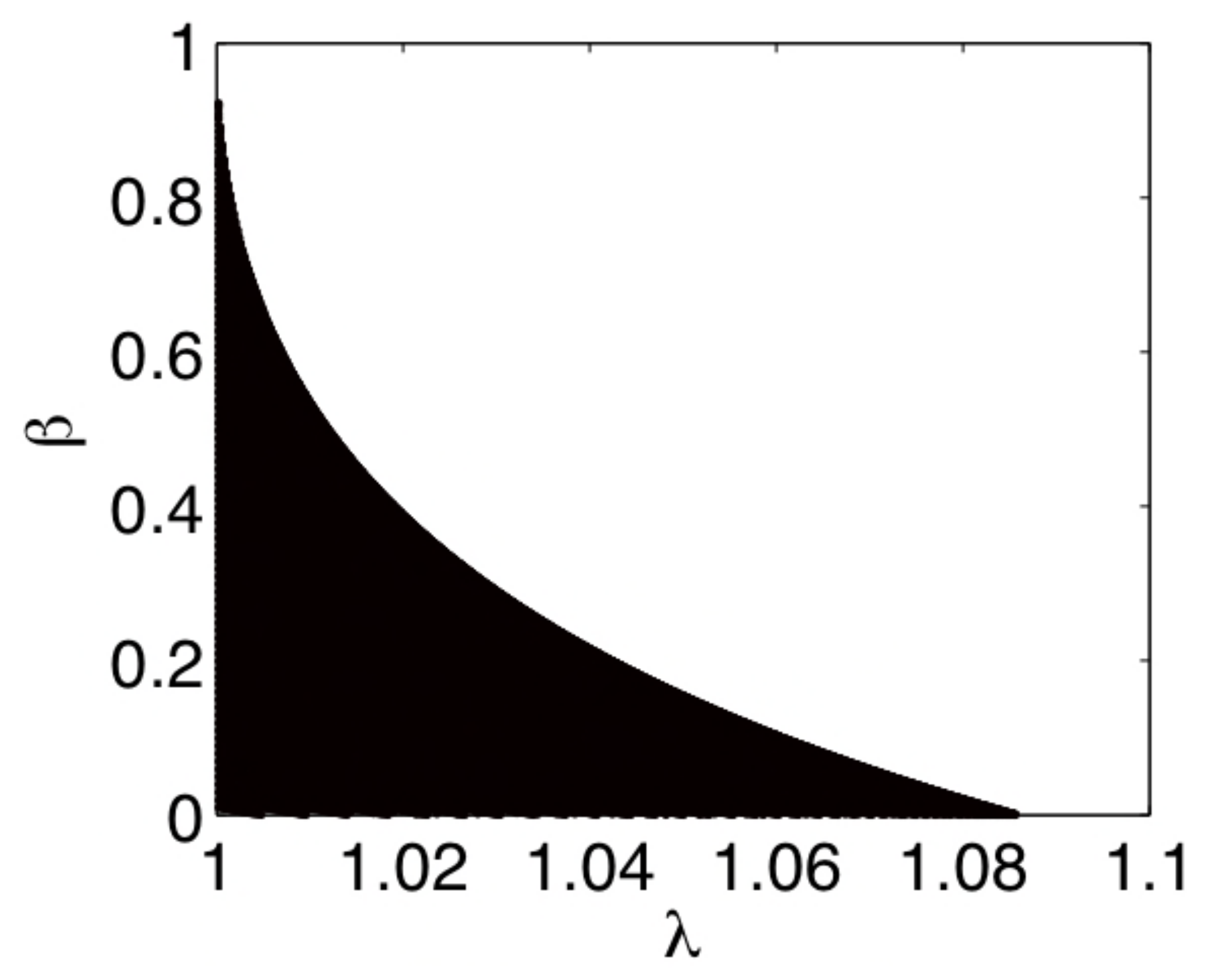}
\caption{Region of stability for the double-loop chaotic recursion \eqref{recurs2}: if the input is bounded by $\beta$ and implemented with parameter $\lambda \geq 1$, then the state sequence is guaranteed to be bounded as long as $(\lambda, \beta)$ falls under the curve and $\gamma$ is fixed according to Theorem \ref{thm1}.}
\label{fig:1}
\end{center}
\end{figure}

While the parameter $\gamma$ is assumed fixed in Theorem \ref{thm1}, we prove a more general stability result in Section \label{sec:proof} where $\gamma$ can assume any of an interval of values.  While the admissible range for $\beta$ as a function of $\lambda$ is not derived explicitly in Theorem \ref{thm1} , we numerically derive this range, as plotted in Figure \ref{fig:1}, and see for example that for positivity of $\beta$ in \eqref{beta}, necessarily $\lambda \leq 1.085$.   Smaller values of $\beta$ imply tighter bounds on the state sequence.

\section{Numerical experiments} 
In this section, we analyze the optimality of the bounds of Theorem \ref{thm1} via numerical simulations.  

\begin{itemize}

\item In Figure~\ref{fig:2}, we run \eqref{recurs2} with constant input sequence $ f_n \equiv \beta$ over a range of values of $\beta$, and over a range of values for $\lambda \geq 1$.  We let $\gamma$ be assigned according to Theorem \ref{thm1}.  The curve $\beta_{observed}$ traces the maximal observed value of $\beta$ for which the state sequence $(v_n)$ remains bounded.   By ``bounded", we mean that $| v_n | \leq 1000$ for 1 million iterations, a bound chosen from repeated observations.  We compare $\beta_{observed}$ to $\beta_{theoretical}$, the largest value of $\beta$ for which Theorem \ref{thm1} guarantees stability (in agreement with the curve in Figure \ref{fig:1}).  Clearly there is a gap between $\beta_{observed}$ and $\beta_{theoretical}$.  The true gap between experiment and theory may be smaller, though, because $\beta_{observed}$ traces the threshold for stability of constant input only.

 \begin{figure}[h!]
\begin{center}
\includegraphics[width=6cm, height=6cm]{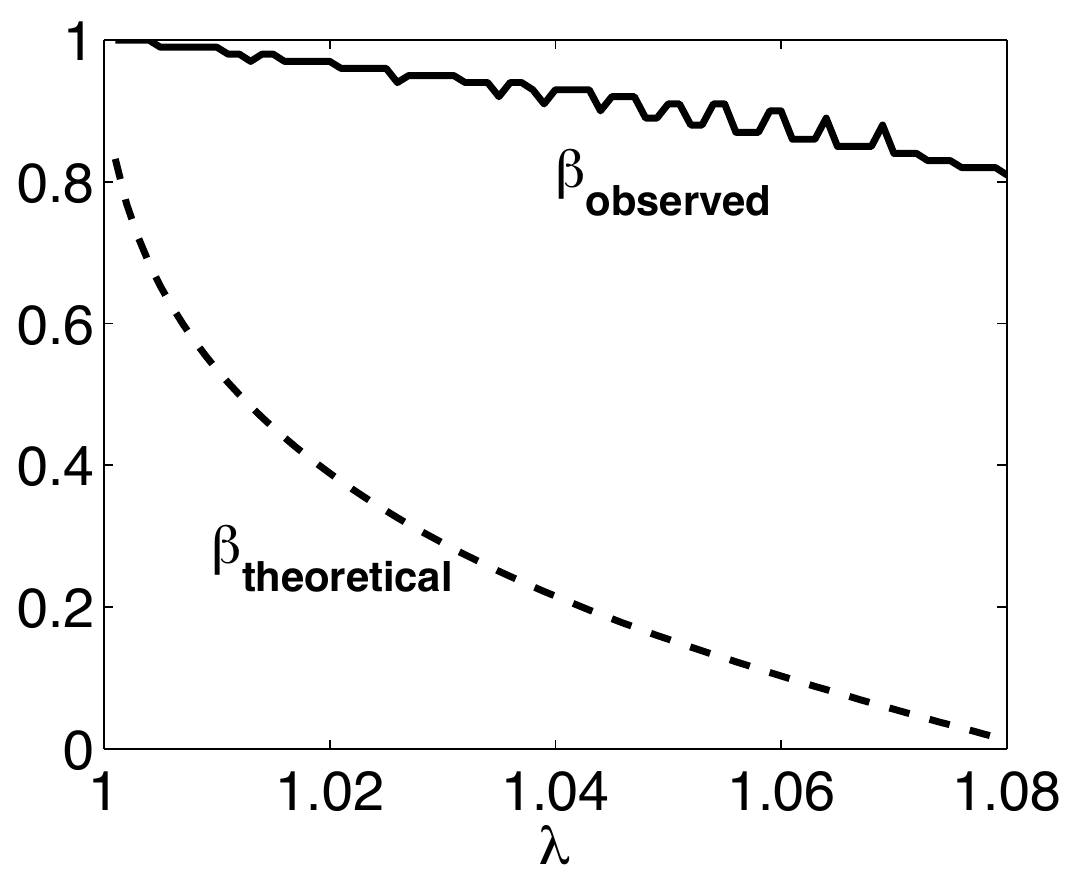}
\caption{Theoretical and empirical stability thresholds for the chaotic double-loop $\Sigma \Delta$ scheme as functions of the expansion $\lambda$ and level of constant input $f_n \equiv \beta$.  The parameter $\gamma = \gamma(\beta,\lambda)$ is as in Theorem \ref{thm1}.} 
\label{fig:2}
\end{center}
\end{figure}

\begin{figure}[h!]
\begin{center}
\includegraphics[width=7cm,height=6cm]{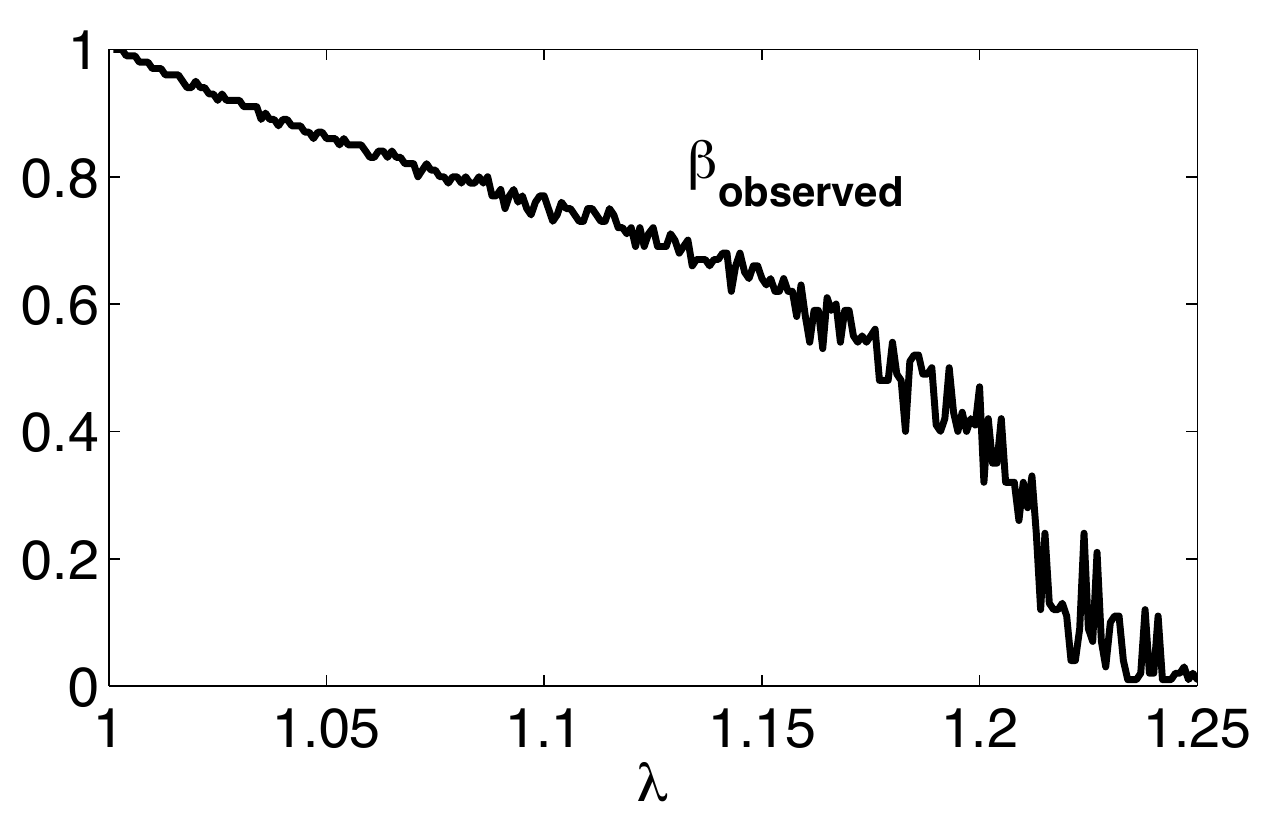}
\caption{Empirical stability threshold for the chaotic double-loop $\Sigma \Delta$ scheme as a function of the expansion $\lambda$ and level ofconstant input $f_n \equiv \beta$.  The parameter $\gamma = 1$ is fixed.}
\label{fig:3}
\end{center}
\end{figure}

\item In Figure \ref{fig:3} we repeat the procedure from Figure \ref{fig:2}, but now we fix $\gamma = 1$ and trace the minimal observed value of $\beta$ for which the state sequence $(v_n)$ fails to remain bounded.  We see that, at least for constant sequences $f_n \equiv \beta$, the chaotic $\Sigma \Delta$ recursion seems to be stable for a large range of $\gamma$ and over a larger range of $\lambda$ than that guaranteed by Theorem \ref{thm1}.

\begin{figure}[h!]
\begin{center}
\includegraphics[width=8cm, height=6cm]{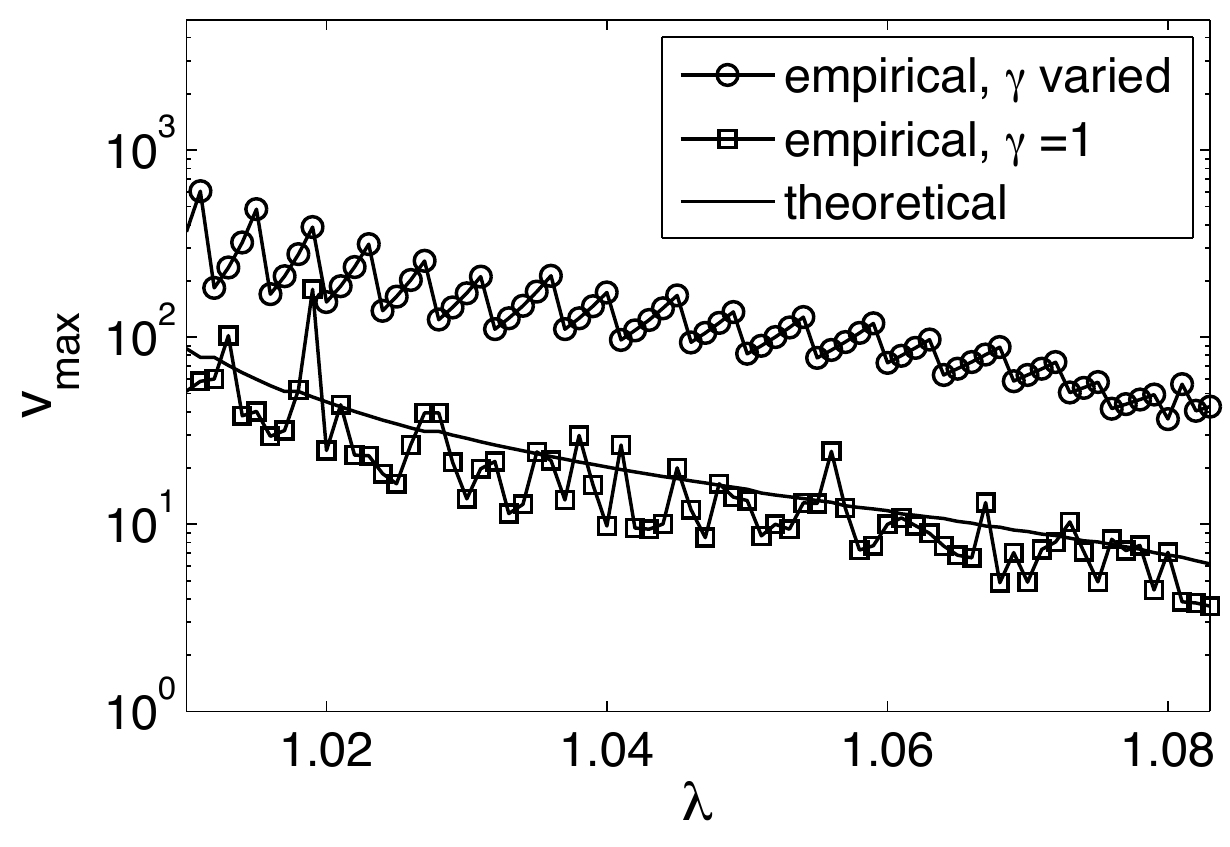}
\caption{Theoretical and empirical bounds on the state sequence $(v_n)$ for the chaotic double-loop $\Sigma \Delta$ scheme as functions of the expansion $\lambda$ and level of constant input $f_n \equiv \beta$. }
\label{fig:4}
\end{center}
\end{figure}

\item The set-up in Figure \ref{fig:4} is analogous to those of Figure \ref{fig:2} and Figure \ref{fig:3}, except we now compare the theoretical bound on the state sequence $v_{max} = \sup_n |v_n|$ given by Theorem \ref{thm1} to the maximal empirical bound on $|v_n|$ at the stability threshold $\beta_{observed}$ when $\gamma$ is varied as a function of $\beta$ and $\lambda$ according to Theorem \ref{thm1}. (open circles).  We also plot the maximal empirical bound on $|v_n|$ at the stability threshold corresponding to $\gamma \equiv 1$ (open squares).   The theoretical and empirical bounds nearly match, suggesting that although our bounds on the parameters $\beta$ and $\lambda$ might be conservative, our theoretical bounds on the state variable $v_{max}$ are near-optimal.  Note that there is no contradiction in observing larger values of $v_{max}$ than the theoretical upper bound given by Theorem \ref{thm1}, as the empirical bound is computed using a larger range for $\beta$.

\end{itemize}

\section{Set-up and Notation}

To prove Theorem \ref{thm1} we first introduce the notation on which the proof relies.  We consider input sequences $(f_n)_{n \in \mathbb{Z}}$ that are bounded $|f_n| \leq \beta < 1$.  We will derive a family of stable positively-invariant regions for the chaotic map \eqref{recurs2} as functions of the chaotic factor $\lambda$ and a parameter $\alpha \in [\beta, 1)$,  similar in spirit to the approach of \cite{OYstability}. We denote by $\delta_n := | f_n - q_n |$ the quantization error at time $n$.  Since $|f_n| \leq \beta \leq \alpha < 1$, this error will be confined to the interval $[1-\alpha, 1+\alpha]$.  We set $\delta_L = 1 - \alpha$ and $\delta_H = 1+\alpha$.  Consider 
\begin{equation}
\label{Sl}
S^{\delta}_{l}(u, v) = (u - \delta, u + v - \delta)
\end{equation}
and
\begin{equation}
\label{Sr}
S^{\delta}_r(u, v) = (u + \delta, u+ v + \delta).
\end{equation}
The update rule in \eqref{recurs2} can be rewritten as
\begin{eqnarray}
\label{S+}
(u_n, v_n) &=& \left\{  \begin{array}{ll} S^{\delta_n}_{l}(\lambda u_{n-1}, v_{n-1}) & \textrm{if } q_n =1, \\ \nonumber \\
S^{\delta_n}_r(\lambda u_{n-1}, v_{n-1})  & \textrm{if }q_n=-1;
\end{array} \right. 
\end{eqnarray}
we will use the shorthand notation
\begin{equation}
\label{S}
(u_n, v_n) = S(\lambda u_{n-1}, v_{n-1}).
\end{equation} 
Let us define the functions
\begin{eqnarray}
\label{b1}
B_1(u) &=& \left\{ \begin{array}{ll} \frac{-1}{2\delta_{H}}u^2 + \frac{1}{2}u + C, & u \leq 0, \nonumber \\ \nonumber \\
\frac{-1}{2\delta_{L}} u^2 + \frac{1}{2}u + C, & u > 0.
\end{array} \right. \nonumber \\ \nonumber \\
B_2(u) &=& \left\{ \begin{array}{ll} \frac{1}{2\delta_{L}}u^2 + \frac{1}{2}u - C, & u \leq 0, \nonumber \\ \nonumber \\
\frac{1}{2\delta_{H}} u^2 + \frac{1}{2}u - C, & u > 0,
\end{array} \right.
\end{eqnarray} 
where $C$ is a positive constant to be specified.  Note that $B_1$ and $B_2$ are convex and concave, respectively; thus the region ${R} = \{ (u,v): B_2(u) \leq v \leq B_1(u) \}$  is a convex set, and is illustrated in Figure \ref{fig:3}.  Consider now the action of $S$ from a point $(u_n, v_n)$ in ${R}$.  Depending on whether $\gamma v_n \geq -u_n$ or $\gamma v_n \leq -u_n,$ a left move $S_l$ or right $S_r$ will be applied to determine $(u_{n+1}, v_{n+1})$.  This suggests we split ${R}$ into two subsets, particularly:
\begin{eqnarray}
\label{SR}
{R}_1 &=& \{(u,v): \quad v \leq B_1(u), \quad v \geq B_2(u), \quad \gamma v \geq -u \} \nonumber \\
{R}_2 &=& \{(u,v): \quad v \leq B_1(u), \quad v \geq B_2(u), \quad \gamma v \leq -u\} \nonumber \\
{R} &=& {R}_1 \cup {R}_2.
\end{eqnarray}

\begin{figure}[h]
\begin{center}
\includegraphics[width=9cm]{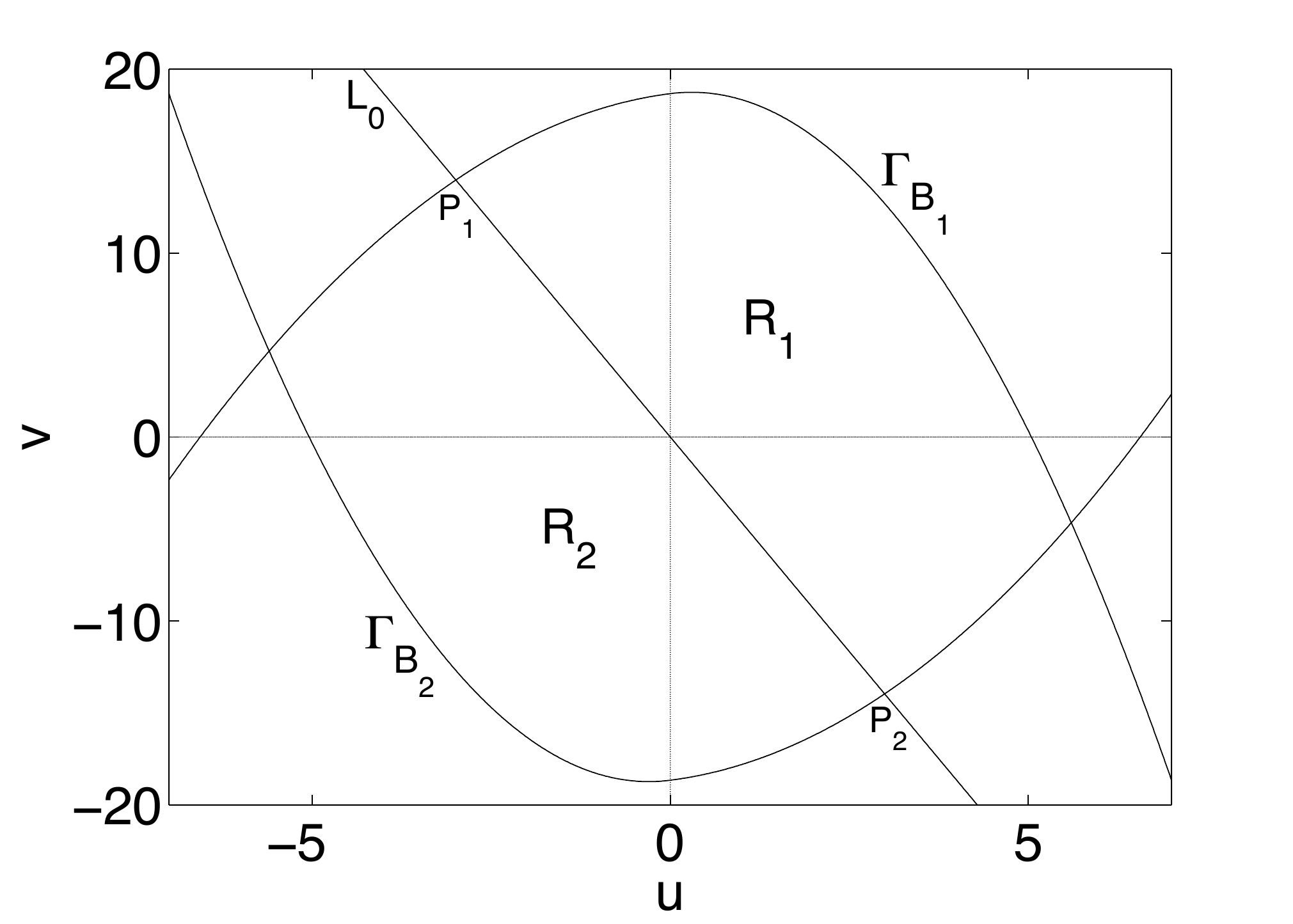}
\end{center}
\caption{An invariant set ${R} = R_1 \cup R_2$.}
\label{fig:5}
\end{figure}
As displayed in Figure $\ref{fig:3}$, the graphs of $B_1$ and  $B_2$ are denoted by $\Gamma_{B_1}$ and $\Gamma_{B_2}$ respectively.   We also illustrate the graph of the line $L_0: u + \gamma v = 0$. The left-most intersection point of $\Gamma_{B_1}$ and $\Gamma_{B_2}$ is denoted by $P_0 = (-u_0, v_0)$, while the intersection of $L_0$ and $\Gamma_{B_1}$ is denoted by $P_1=(-u_1,v_1)$, and the intersection of $L_0$ and $\Gamma_{B_2}$ by $P_2=(u_2,-v_2)$. Because ${R}$ is symmetric about the origin, we have that $(u_2,v_2)=(u_1, v_1)$.  A direct calculation yields $u_0 = (2C\delta_{H}\delta_{L} )^{1/2}.$
Another direct calculation gives
\begin{eqnarray}
\gamma = \frac{u_1}{\big(-\frac{1}{2\delta_{H}} u_1^2 - \frac{1}{2} u_1 + C \big)}.
\end{eqnarray}

{\section{Proof of stability for the chaotic quantization scheme}\label{sec:proof}} 

We now prove some supplementary results. It is through these supplementary results that we obtain the bounds on parameters $(\lambda, \beta, \gamma)$ that will guarantee the stability of the chaotic $\Sigma \Delta$ scheme.

In \cite{OYstability}, Yilmaz proved the following stability result for the map \eqref{S+} in the standard setting, when $\lambda = 1$.

\begin{prop}[Yilmaz]
\label{Yilmaz}
Consider the map \eqref{S+} with $\lambda = 1$.  Suppose that $|f_n| \leq \alpha < 1$.  Suppose that the constant $C$ in \eqref{b1} satisfies
\begin{equation}
\label{lowerc}
C \geq \frac{{2(1+\alpha)}}{1-\alpha}
\end{equation}
and, moreover, that the multiplier $\gamma$ lies in the range
\begin{equation}
\label{gamma}
\frac{1+\alpha}{C - (1+\alpha)} \leq \gamma \leq \frac{\sqrt{2C(1-\alpha)(1+\alpha)}-(1+\alpha)}{ C\alpha+\sqrt{C(1-\alpha)(1+\alpha)/2}}.
\end{equation}
\medskip
 Then $(u', v') = S(u,v) \in {R}$ for any $(u,v) \in {R}$.
\end{prop}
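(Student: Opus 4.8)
The plan is to exploit the point-symmetry of the configuration to reduce the claim to a single affine map acting on one half of $R$, and then to reduce the resulting two-sided containment to a pair of one-variable quadratic inequalities. First I would record that $R$, the splitting \eqref{SR}, and the dynamics \eqref{S+} are all invariant under the reflection $(u,v)\mapsto(-u,-v)$: from the definitions \eqref{b1} one checks $B_2(u)=-B_1(-u)$, whence $R_2=-R_1$, and a direct computation gives $S^{\delta}_r(-u,-v)=-S^{\delta}_l(u,v)$. Consequently it suffices to show $S^{\delta}_l(u,v)\in R$ for every $(u,v)\in R_1$ and every $\delta\in[\delta_L,\delta_H]$; the behaviour on $R_2$ then follows by symmetry, and the seam $L_0$ is consistently assigned to $R_1$ since ${\cal Q}(0)=1$ invokes a left move there.

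Writing $(u',v')=S^{\delta}_l(u,v)=(u-\delta,\;u+v-\delta)$, membership $(u',v')\in R$ is exactly the pair of inequalities $u+v-\delta\le B_1(u-\delta)$ and $u+v-\delta\ge B_2(u-\delta)$. The key simplification is that in both inequalities the side depending on the state is the single quantity $u+v-\delta$, which is increasing in $v$; hence over the slice $\{(u,\cdot)\in R_1\}$ the first inequality is tightest on the upper boundary $\Gamma_{B_1}$ (largest $v$) and the second is tightest on the lower boundary $\Gamma_{B_2}$ (smallest $v$). This collapses the whole invariance claim to the two scalar statements $B_1(u-\delta)\ge B_1(u)+u-\delta$, to be verified on the $u$-range of $\Gamma_{B_1}\cap R_1$, and $B_2(u-\delta)\le B_2(u)+u-\delta$, on the $u$-range of $\Gamma_{B_2}\cap R_1$, each required for all $\delta\in[\delta_L,\delta_H]$.

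To dispatch the $\delta$-dependence cheaply I would observe that, on each branch of the piecewise-quadratic $B_1$ and $B_2$, both scalar functions of $\delta$ are \emph{concave}: the coefficient of $\delta^2$ equals $-\tfrac{1}{2\delta_H}$ or $-\tfrac{1}{2\delta_L}$ in every case, which is negative, so their minimum over $[\delta_L,\delta_H]$ is attained at an endpoint. It then remains only to verify finitely many quadratic-in-$u$ inequalities, the sole bookkeeping being the kink of $B_1,B_2$ at $u=0$, which forces a short case split according to whether $u$ and $u-\delta$ lie in the same branch. The constant $C$ enters only through the geometry of $R$ — it fixes the tip $u_0=\sqrt{2C\delta_H\delta_L}$ and the heights of the parabolas — and the lower bound \eqref{lowerc} is precisely what guarantees that $R_1$ is nonempty and that the endpoint $P_1$ exists with the correct sign.

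The main obstacle I anticipate is isolating exactly where these inequalities are binding and converting that into the admissible $\gamma$-window \eqref{gamma}. The critical configuration is the left endpoint $P_1=(-u_1,v_1)=\Gamma_{B_1}\cap L_0$ of $R_1$: its image $S^{\delta}_l(P_1)$ is driven farthest to the lower-left, so the requirement that it not fall below $\Gamma_{B_2}$ — together with the companion requirement that the image of $\Gamma_{B_1}$ remain beneath $\Gamma_{B_1}$ toward the right tip — is what pins down $\gamma$. Using the relation $\gamma=u_1/\big(-\tfrac{1}{2\delta_H}u_1^2-\tfrac12 u_1+C\big)$ recorded at the end of the previous section, these endpoint conditions yield both the lower bound $\gamma\ge(1+\alpha)/(C-(1+\alpha))$ and the radical upper bound in \eqref{gamma}. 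Solving the tangency/containment equations at $P_1$ and at the tips, and checking that no interior point of the $u$-ranges is worse than these endpoints, is the genuine content of the argument; the symmetry reduction and the monotonicity/concavity reductions above are what make that final computation finite and explicit.
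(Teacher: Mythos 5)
The paper never proves this proposition: it is imported from Yilmaz \cite{OYstability} and used as a black box, so the only in-paper material to compare against is the proof of Proposition~\ref{prop4}, which reruns the same analysis for $\lambda>1$. Your overall architecture --- reduce by the point symmetry $B_2(u)=-B_1(-u)$, $S_r^{\delta}(-u,-v)=-S_l^{\delta}(u,v)$ to the single claim $S_l^{\delta}(R_1)\subseteq R$; push the check to the boundary of $R_1$ using monotonicity of $v'=u+v-\delta$ in $v$; use affineness/concavity in $\delta$ to reduce to $\delta\in\{\delta_L,\delta_H\}$; and isolate $P_1$ and the tips as the binding configurations --- is exactly the architecture of that proof, so the plan is the right one in spirit.

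There is, however, a genuine flaw in your stated reduction. The lower boundary of $R_1$ is $\Gamma_{B_2}$ only for $u\ge u_2$; for $u\in[-u_1,u_2]$ the infimum of $v$ over the slice $\{(u,\cdot)\in R_1\}$ is attained on the chord of $L_0$ joining $P_1$ to $P_2$, where the binding inequality is $B_2(u-\delta)\le -u/\gamma+u-\delta$, not $B_2(u-\delta)\le B_2(u)+u-\delta$. As written, your two scalar inequalities involve $\gamma$ only through the endpoints of their $u$-ranges, and the omitted chord is precisely where the window \eqref{gamma} is generated; the correct decomposition (the one used in Proposition~\ref{prop4}) is to check $\Gamma_{B_1}$ on $[-u_1,u_0]$, the arc $\Lambda=\Gamma_{B_2}\cap\{u_2\le u\le u_0\}$, and the two endpoints $P_1,P_2$ of the chord, the chord itself being controlled by its endpoints via affineness of $S_l^{\delta}$ and convexity of $\{v\ge B_2(u)\}$. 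Your final paragraph does single out $P_1$, so the plan is repairable, but the quantitative core is still owed: one must show that the $\Lambda$ inequality can fail when $u-\delta$ crosses onto the steeper branch of $B_2$, which forces $u_2=u_1\ge\delta_H$ (the lower end of \eqref{gamma} via \eqref{gammaconst}), and that keeping $S_l^{\delta_H}(P_1)$ within the horizontal extent of $R$ forces $u_1\le u_0-\delta_H$ (the upper end of \eqref{gamma}, with \eqref{lowerc} equivalent to $u_0\ge 2\delta_H$ so that this window is nonempty). Without that computation the proposal identifies the right critical points but does not yet establish the stated parameter range.
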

We observe that $C$, reflecting the size of the invariant region $R$, must blow up as $\alpha$ approaches $1$; in practice it is preferable to fix $\alpha = \alpha_{max}$ away from $1$ so that state variables remain sufficiently small.
We will leverage this result to prove stability of the map when $\lambda > 1$ by considering the same family of invariant sets \eqref{SR}, but restricting the dynamic range of the input to $|f_n| \leq \beta < \alpha$, where $\beta = \beta(\lambda)$ is diminished to counterbalance the expansion $\lambda$.  In particular, we fix a small parameter $\varepsilon > 0$ and take $\beta = \frac{\alpha-\varepsilon}{1+\varepsilon}$; note that $[1-\beta, 1+ \beta] \subset [\frac{1}{1-\varepsilon}\delta_L, \frac{1}{1+\varepsilon}\delta_H]$.

We first show that under slightly more restrictive conditions than those in Proposition \ref{Yilmaz}, $S_{l}(R_1)$ lies under the graph of $B_1$ even when $\lambda > 1$.

\begin{lem}
\label{invariant}
Fix $\alpha < 1$ and $\lambda > 1,$ and fix $\varepsilon \in (0,\alpha)$.
Suppose that $C$ and $\gamma$ satisfy  \eqref{lowerc} and \eqref{gamma}, and let $\delta_L = 1-\alpha$ and $\delta_H = 1 + \alpha$.  Suppose in addition that
\begin{equation}
\label{cbound}
C \leq \frac{\varepsilon^2 \hspace{1mm}\delta_{L}}{2(\lambda-1)^2\delta_{H}},
\end{equation}
or equivalently, that
\begin{equation}
\label{u0bound} 
u_0 \leq \frac{\varepsilon \delta_L}{\lambda-1}.
\end{equation}
If $(u,v) \in {R}$ and $(u', v') = S^{\delta}_l(\lambda u, v)$ for $\delta \in \big[\frac{1}{1 - \varepsilon}\delta_{L}, \frac{1}{1 + \varepsilon} \delta_{H}\big]$, then $v' \leq B_1(u')$.
\end{lem}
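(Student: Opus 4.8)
The plan is to collapse the two-dimensional claim into a one-variable inequality and then verify it by a short case analysis keyed to the kink of $B_1$ at the origin. First I would write $(u',v') = S^\delta_l(\lambda u, v)$ out coordinatewise, giving $u' = \lambda u - \delta$ and $v' = \lambda u + v - \delta = u' + v$. Thus $v'$ is increasing in $v$ while $u'$ does not depend on $v$, so among all points of $R$ with a fixed first coordinate $u$ the value $v'$ is largest when $v = B_1(u)$, i.e. on $\Gamma_{B_1}$; on the sub-arc where the left move is actually selected (the part of $\Gamma_{B_1}$ in $R_1$, lying to the right of the point $P_1 = (-u_1,v_1)$ cut out by $L_0$, whose location is governed by the hypothesis \eqref{gamma} on $\gamma$) it therefore suffices to prove the scalar inequality $\Phi(u,\delta) := B_1(\lambda u - \delta) - B_1(u) - (\lambda u - \delta) \ge 0$ for $\delta \in [\tfrac{1}{1-\varepsilon}\delta_L,\tfrac{1}{1+\varepsilon}\delta_H]$.

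Next I would extract the two quantitative inputs the hypotheses supply. Since $0 < \varepsilon < \alpha$, one has the containment $[\tfrac{1}{1-\varepsilon}\delta_L,\tfrac{1}{1+\varepsilon}\delta_H] \subset [\delta_L,\delta_H]$ (the lower endpoint is raised above $\delta_L$, the upper endpoint lowered below $\delta_H$, and the interval is nonempty precisely because $\varepsilon \le \alpha$); hence the unscaled inequality $\Phi_0(u,\delta) := B_1(u-\delta) - B_1(u) - (u-\delta) \ge 0$ is exactly Yilmaz's invariance result (Proposition \ref{Yilmaz}) restricted to the left move, and it holds with strictly positive slack away from the matched endpoints $\delta\in\{\delta_L,\delta_H\}$. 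The second input is that \eqref{u0bound} bounds the expansion uniformly on $R$: since $|u|\le u_0 \le \varepsilon\delta_L/(\lambda-1)$, the shift $h := (\lambda-1)u$ separating $u' = \lambda u - \delta$ from the unscaled image $\bar u := u-\delta$ satisfies $|h| \le \varepsilon\delta_L$.

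The tempting move is to split $\Phi = \Phi_0 + \big(B_1(\bar u + h) - B_1(\bar u) - h\big)$ and bound the pieces separately, but this fails: on the right branch $B_1' < \tfrac12 < 1$, so for $h>0$ the second bracket is negative, and discarding the slack in $\Phi_0 \ge 0$ throws away exactly what is needed. I would instead estimate $\Phi$ directly, using that $B_1$ is $C^1$ and concave, being a concave quadratic with curvature $-1/\delta_H$ for $u\le 0$ and $-1/\delta_L$ for $u>0$ and matching slope $\tfrac12$ at the kink. Splitting into cases by the signs of $u$ and of $u'=\lambda u-\delta$ (which branch of $B_1$ each argument lies in), $\Phi$ becomes in each case an explicit quadratic that factors into linear forms in $(u,\delta)$; in the same-branch cases both factors have definite sign once one uses $\delta \le \tfrac{1}{1+\varepsilon}\delta_H$ together with $|h|\le\varepsilon\delta_L$. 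For instance when $u\le 0$ (whence $u'<0$ automatically) one obtains $\Phi = -\tfrac{u'+u}{2\delta_H}\,(u'-u+\delta_H)$, and both $-\tfrac{u'+u}{2\delta_H} \ge 0$ and $u'-u+\delta_H = (\lambda-1)u - \delta + \delta_H \ge 0$ hold under the stated bounds.

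The main obstacle is the mixed case in which $u$ and $u'$ straddle the kink, so that $\Phi$ is assembled from the two different quadratic pieces of $B_1$ and no longer factors cleanly. Here I would use the $C^1$-matching of $B_1$ at $0$ to combine the two pieces and then balance the unavoidable $O(\varepsilon\delta_L)$ error from the expansion against the curvature gap $\tfrac{1}{\delta_L}-\tfrac{1}{\delta_H}$ and the slack in the $\delta$-range. This is exactly the step that makes the quantitative hypothesis \eqref{cbound} (equivalently \eqref{u0bound}) indispensable: it is precisely strong enough to force $|h|\le\varepsilon\delta_L$, which is what closes the mixed-branch estimate. I expect the remaining cases to reduce to routine sign-checks of linear factors.
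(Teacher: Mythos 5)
Your route is viable but genuinely different from --- and considerably more laborious than --- the paper's. The paper's entire proof is a reparametrization: writing $\delta' := \delta - (\lambda-1)u$, one checks directly that $S^{\delta}_l(\lambda u, v) = S^{\delta'}_l(u,v)$, so the expanded left move \emph{is} the standard ($\lambda=1$) left move with a perturbed quantization error; since $|(\lambda-1)u| \leq (\lambda-1)u_0 \leq \varepsilon\delta_L$ by \eqref{u0bound} and $\delta$ lives in the shrunken interval $[\tfrac{1}{1-\varepsilon}\delta_L, \tfrac{1}{1+\varepsilon}\delta_H]$, one verifies $\delta' \in [\delta_L,\delta_H]$ and invokes Proposition \ref{Yilmaz} verbatim. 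This is exactly the move you circled around without finding: your rejected ``tempting split'' perturbs the \emph{argument} of $B_1$, whereas the paper absorbs the expansion into $\delta$, where the slack of the shrunken $\delta$-interval (which you correctly identified) is precisely what pays for it. Tellingly, the linear factors in your case analysis, $u'-u+\delta_H = \delta_H - \delta'$ and $u'-u+\delta_L = \delta_L - \delta'$, are literally the two endpoint checks $\delta' \leq \delta_H$ and $\delta' \geq \delta_L$ of the paper's proof; your direct verification is re-deriving Proposition \ref{Yilmaz} for the perturbed error by hand.

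That said, your plan does close, including the mixed-branch case you flag as the main obstacle: for $u>0>u'$ one does not need any delicate balancing against the curvature gap, only a choice of which curvature to relax based on the sign of $u+u'$ (if $u+u'\geq 0$ bound $-\tfrac{1}{2\delta_H}u'^2 \geq -\tfrac{1}{2\delta_L}u'^2$ and factor as in the right-branch case; if $u+u'<0$ bound $\tfrac{1}{2\delta_L}u^2 \geq \tfrac{1}{2\delta_H}u^2$ and factor as in the left-branch case). Two smaller points: the lemma is stated for all $(u,v)\in R$, not only the sub-arc of $\Gamma_{B_1}$ in $R_1$, but your monotonicity reduction to $v=B_1(u)$ works for all $u\in[-u_0,u_0]$ and your case analysis covers that full range anyway; and as written the mixed case is a plan (``I would use\dots I expect\dots'') rather than a proof, so you would need to supply the factorization above to make the argument complete.
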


\begin{proof}
Suppose $(u,v) \in {R}$.  Let  $\delta \in [\frac{1}{1 - \varepsilon}\delta_{L}, \frac{1}{1 + \varepsilon} \delta_{H}]$ and consider $\delta' = \delta - (\lambda - 1)u$.  A straightforward computation shows that $S^{\delta}_l(\lambda u, v) = S^{\delta'}_l(u,v);$  
that is, a left-iteration by $\delta$ of the map with expansion $\lambda > 1$ is equal to a left-iteration by $\delta'$ of the standard map with $\lambda = 1$.  Since $C$ and $\gamma$ are admissible for the standard map by assumption, the stated result will follow by Proposition \ref{Yilmaz} as long as $\delta' \in [\delta_{L}, \delta_{H}]$.  Observe that because $(u,v) \in R$, $-u_0 \leq u \leq u_0$ and $v \leq B_1(u)$. Then $\delta' = \delta - (\lambda - 1)u \leq \delta_{H}$ for all $\delta \in [\frac{1}{1-\varepsilon} \delta_{L}, \frac{1}{1 + \varepsilon} \delta_{H}]$ and all $u \in [-u_0, u_0]$ if and only if $\frac{1}{1 + \varepsilon}\delta_{H} + (\lambda-1)u_0 \leq \delta_{H}.$
Rewe warranging, we see that this inequality is implied by the stated upper bound on $u_0$.
An analogous argument verifies further that $\delta' \geq \delta_{L}$; we leave the details to the reader.
\end{proof}

Lemma \ref{invariant} gives conditions under which $S_l(R_1)$ lies under the graph of $B_1$; to complete the argument that $S({R}) \subseteq {R}$, it remains to show that under a subset of the same conditions, $S_l({R}_1)$ lies above the graph of $B_2$.  Indeed, by symmetry of the action of $S$ and of the region $R$, $S_l(R_1) \subset R$ implies $S_r(R_2) \subset R$.   To show that $S_l(R_1)$ lies above the graph of $B_2$, we follow a similar line of argument to that used in \cite{OYstability} when $\lambda = 1$.

\begin{prop}
\label{prop4}
Fix $\alpha, C,$ and $\varepsilon$ as in Lemma \eqref{invariant} and consider the associated invariant set $R$ as defined in \eqref{SR}. Let $\delta_H = 1 + \alpha$ and $\delta_L = 1 - \alpha$.  Let $\beta = (\alpha - \varepsilon)/(1 + \varepsilon)$ and suppose that 
\begin{eqnarray}
\label{cond1} 
\delta_H  \leq u_1 \leq u_0 - \delta_H \label{cond1}.
\end{eqnarray} Then $S_l(R_1) \subseteq R$ for any $\delta \in [1-\beta, 1+\beta]$. 
\end{prop}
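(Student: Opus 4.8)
The plan is to convert the expanded left-move into an ordinary ($\lambda=1$) left-move, lean on Proposition~\ref{Yilmaz} and Lemma~\ref{invariant} wherever the conversion stays within the admissible error window $[\delta_L,\delta_H]$, and handle by direct computation the two pieces of $R_1$ on which the expansion forces the window to be exceeded. The starting point is the affine identity already used in Lemma~\ref{invariant},
\[
S^{\delta}_l(\lambda u, v) = S^{\delta'}_l(u,v), \qquad \delta' := \delta - (\lambda-1)u,
\]
so that, writing $(u',v')=S^{\delta}_l(\lambda u,v)$, we have $u'=\lambda u-\delta$ and $v'=u'+v$. Since $[1-\beta,1+\beta]\subseteq[\tfrac{1}{1-\varepsilon}\delta_L,\tfrac{1}{1+\varepsilon}\delta_H]$ and the hypotheses of Lemma~\ref{invariant} are in force, the upper bound $v'\le B_1(u')$ is already available; the whole task is therefore to prove the lower bound $v'\ge B_2(u')$ for every $(u,v)\in R_1$ and every $\delta\in[1-\beta,1+\beta]$.

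For the lower bound I would first remove two variables by monotonicity and convexity. Because $v'-B_2(u')=u'+v-B_2(u')$ is strictly increasing in $v$, for fixed $u$ and $\delta$ its minimum over $R_1$ is attained on the lower boundary of $R_1$, which consists of the segment of $L_0$ from $P_1$ to $P_2$ together with the arc of $\Gamma_{B_2}$ to the right of $P_2$. Since $B_2$ is convex and $u'$ is affine in $\delta$, the function $\delta\mapsto v'-B_2(u')$ is concave, so its minimum over $[1-\beta,1+\beta]$ occurs at an endpoint $\delta\in\{1-\beta,1+\beta\}$. This reduces the claim to a finite family of one-parameter inequalities along the two boundary pieces.

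On any portion of the boundary where the effective error $\delta'=\delta-(\lambda-1)u$ still lies in $[\delta_L,\delta_H]$, the identity above exhibits $(u',v')$ as a single standard left-move of a point of $R$, and Proposition~\ref{Yilmaz} gives $S^{\delta'}_l(u,v)\in R$, hence $v'\ge B_2(u')$, for free. The remaining, and I expect hardest, cases are the two ends where the expansion drives $\delta'$ out of $[\delta_L,\delta_H]$. Near $P_1=(-u_1,v_1)$ one has $u<0$, so $-(\lambda-1)u=(\lambda-1)u_1>0$ can push $\delta'$ above $\delta_H$; near the right end of the $\Gamma_{B_2}$-arc one has $u>0$ large, so $-(\lambda-1)u<0$ can pull $\delta'$ below $\delta_L$. (A short derivative check on $u'/\delta_H-\tfrac12$ confirms that it is precisely \emph{small} $\delta'$ at large $u'$ that threatens the $B_2$ bound, which is why the window's lower end matters there.) In either case Proposition~\ref{Yilmaz} no longer applies.

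Here I would argue directly. On the $L_0$-segment, $v=-u/\gamma$, so $v'-B_2(u')=\lambda u-u/\gamma-\delta-B_2(\lambda u-\delta)$ is concave in $u$ and its minimum is attained at the endpoints $u\in\{-u_1,u_1\}$, i.e.\ at the images of $P_1$ and $P_2$; on the $\Gamma_{B_2}$-arc the inequality $v'\ge B_2(u')$ becomes an explicit quadratic inequality in $u$. The role of condition~\eqref{cond1}, $\delta_H\le u_1\le u_0-\delta_H$, is exactly to locate these extreme images: the bound $u_1\le u_0-\delta_H$ (combined with $(\lambda-1)u_0\le\varepsilon\delta_L$ from~\eqref{u0bound}) keeps the $u$-coordinate $-\lambda u_1-\delta$ of the image of $P_1$ to the right of $-u_0$, so the image lands in the range where $B_2\le B_1$, while $u_1\ge\delta_H$ forces $u'=\lambda u-\delta>0$ on the right-hand pieces, placing the images on the gentler $u>0$ branch of $B_2$ where the concavity estimate can be closed. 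Discharging these two quadratic inequalities, using $\gamma=\delta_L/\delta_H$, $u_0=\sqrt{2C\delta_L\delta_H}$, and the standing bounds on $C$ and $u_0$, is the technical heart of the argument; together with the upper bound inherited from Lemma~\ref{invariant} it yields $S_l(R_1)\subseteq R$.
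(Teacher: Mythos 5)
There is a genuine gap: what you have written is a plan, not a proof. The decisive verifications are postponed as ``two quadratic inequalities'' that you describe as ``the technical heart of the argument'' and never discharge, so the lower bound $v'\ge B_2(u')$ is not actually established anywhere in your text. Worse, the case split around which you organize the endgame --- whether the effective error $\delta'=\delta-(\lambda-1)u$ stays inside $[\delta_L,\delta_H]$ --- is moot. Since $C$ is fixed ``as in Lemma~\ref{invariant}'', the bound \eqref{u0bound} (equivalently \eqref{cbound}) is in force, and the computation in the proof of that lemma shows precisely that $\delta'\in[\delta_L,\delta_H]$ for \emph{every} $(u,v)\in R$ and every $\delta\in[1-\beta,1+\beta]\subseteq[\tfrac{1}{1-\varepsilon}\delta_L,\tfrac{1}{1+\varepsilon}\delta_H]$: the worst cases $\tfrac{1}{1+\varepsilon}\delta_H+(\lambda-1)u_0\le\delta_H$ and $\tfrac{1}{1-\varepsilon}\delta_L-(\lambda-1)u_0\ge\delta_L$ both follow from $(\lambda-1)u_0\le\varepsilon\delta_L$ and $\varepsilon\le\alpha$. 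So the ``hard cases'' you reserve for direct computation are empty, and your ``free'' branch (Proposition~\ref{Yilmaz} applied to $S^{\delta'}_l$ at a point of $R_1$) already covers all of $R_1$. As submitted, the argument neither notices this nor carries out the computations it promises, so it proves nothing beyond the reductions.

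For comparison, your reductions themselves do match the paper's: restrict to the lower boundary of $R_1$ by monotonicity in $v$, to the endpoints $\delta\in\{1-\beta,1+\beta\}$ by affineness of $S_l$ in $\delta$, and to the endpoints $P_1,P_2$ of the $L_0$-segment by affineness in $(u,v)$. But the paper then closes the two remaining cases with short monotonicity arguments rather than quadratic inequalities: for $P_1$ it uses the translation identity $B_1(u)=B_1(u-\delta_H)-(u-\delta_H)$ together with \eqref{cond1} and \eqref{u0bound} to slide the image point along a unit-slope line until it sits above $\Gamma_{B_2}$; for the arc $\Lambda$ it shows $0\le\lambda u_\Lambda-\delta\le u_\Lambda$ and invokes the monotonicity of $B_2$ on $u>0$. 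If you want to salvage your draft, either close the argument by the observation above that $\delta'$ never leaves $[\delta_L,\delta_H]$, or carry out the endpoint cases explicitly along the paper's lines. (A minor further point: you invoke $\gamma=\delta_L/\delta_H$, which is the specialization made in Theorem~\ref{thm1}, not a hypothesis of this proposition; here $\gamma$ is only constrained to the interval \eqref{gamma} through condition \eqref{cond1}.)
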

%Combining the bounds from \eqref{cond1} and \eqref{u0bound} gives the following bound on $u_1$:
%\begin{equation}
%\label{boundu1}
%u_1 \leq u_0 - \delta_H \leq \frac{\varepsilon \delta_L - (\lambda - 1)\delta_H}{\lambda-1}.
%\end{equation}

\begin{proof}
Lemma \eqref{invariant} established that $S_{l}(R_1)$ lies under $\Gamma_{B_1}$, therefore it is sufficient to show that $S_l(R_1)$ stays above $\Gamma_{B_2}$.  It is easily seen that if $ v_1 \geq v_2 $, then $(u, v_1)$ and $(u, v_2)$ get mapped to $(u' , v_1')$ and $(u' , v_2')$, with $ v_1' \geq v_2'$. Therefore, if we write $\Lambda = \{(u,v) : v = B_2(u)  \textrm{ and } u_2 \leq u \leq u_0 \}, $
 then we only need to ensure that the map of $\Lambda$, as well as that of the line segment connecting $P_1$ to $P_2$,  stays above $\Gamma_{B_2}$.
Additionally,  for the line segment it is sufficient to check just the end points $P_1$ and $P_2$ because the map $S_l$ is affine in $(u,v)$ due to the convexity of ${R}$. For each end point and for $\Lambda$, we only need to check $\delta_L' := 1-\beta$ and $\delta_H' := 1+\beta$ since the map $S_l$ is affine with respect to $\delta$.  
 
\begin{enumerate}

\item{\bf Case $P_1$:} Because $0< \delta'_L < \delta'_H$, and because the two points $\big( -(\lambda u_1 + \delta'_L), v_1 - (\lambda u_1 + \delta'_L) \big)$ and $\big( -(\lambda u_1+ \delta'_H), v_1 - (\lambda u_1 + \delta'_H) \big)$ lie on the same line with positive unit slope, whereas $B_2$ is decreasing for $u \leq 0$, both points lie above $\Gamma_{B_2}$ if the latter point is above $\Gamma_{B_2}$.  The condition \eqref{cond1} along with the upper bound \eqref{u0bound} imply:
\begin{eqnarray}
\label{u1array}
u_1 \leq u_0 \leq \frac{\varepsilon \delta_L}{\lambda-1} \leq \frac{\varepsilon \delta_H}{(\lambda-1)(1+\varepsilon)} = \frac{\delta_H - \delta'_H}{\lambda-1}.
\end{eqnarray}
 The third inequality follows because $\varepsilon < \alpha$.  Rearranged, this is $\lambda u_1 + \delta'_H \leq u_1 + \delta_{H},$ which combined with \eqref{cond1} implies that $ -u_0 \leq -(\lambda u_1 + \delta'_H)$.  \\

By construction, $B_1(u) = B_1(u - \delta_H) - (u-\delta_H)$. Thus
\begin{eqnarray}
v_1 - \lambda u_1 - \delta'_H = B_1(-u_1) - (\lambda u_1 + {\delta}'_H) && \nonumber \\
= B_1\big(-(u_1 + \delta_H) \big) + (u_1 + \delta_H) - (\lambda u_1 + \delta'_H) &&
\nonumber \\\geq B_1\big(-(u_1 + \delta_H) \big)
\geq B_2\big(-(u_1 + \delta_H) \big) &&\nonumber \\
\geq  B_2\big(-(\lambda u_1 + \delta'_H) \big) && 
\end{eqnarray}  
the third to last and last inequalities follow respectively from \eqref{u0bound} and the decrease of $B_2$ when $u \leq 0$.  

\item{ \bf Case $P_\Lambda$: }Recall that $P_2 = - P_1$ by symmetry of $B_1$ and $B_2$. Since $B_2$ is increasing for all $u>0$, it is sufficient to show that   $0 \leq \lambda u_\Lambda - \delta'_H \leq  \lambda u_\Lambda -\delta'_L \leq u_\Lambda$ for all points $P_\Lambda$.  The left-most inequality is true because $u_\Lambda \geq u_1 \geq \delta_H$ following \eqref{cond1}. All we have to show is that $\lambda u_\Lambda - \delta'_L \leq u_\Lambda$, or equivalently, that $u_\Lambda \leq u_0 \leq \delta'_L/(\lambda-1)$. But this is a consequence of \eqref{u0bound}.
\end{enumerate}
\end{proof}

With Proposition \ref{Yilmaz}, Proposition \ref{prop4} and Lemma \ref{invariant} in hand, we can now prove our main result.

\begin{thm}
\label{thm2}
Fix $\alpha < 1$ and $\lambda > 1$ such that $\lambda \leq 1 + \frac{\alpha(1-\alpha)}{2(1+\alpha)},$
and suppose that $\varepsilon$ is in the range 
$$\frac{2(1+\alpha)(\lambda-1)}{1-\alpha} \leq \varepsilon \leq  \alpha.$$
Let $\delta_H = 1+\alpha$ and $\delta_L = 1-\alpha$, and suppose $\gamma$ is in the range
$$
\frac{2\delta_H^2(\lambda-1)^2}{\varepsilon^2\delta_L - 2\delta_H^2(\lambda-1)^2} \leq \gamma \leq \frac{\varepsilon \delta_L^2 - \delta_H\delta_L(\lambda-1)}{\delta_H^2(\lambda-1)}.
$$
Consider the chaotic double-loop $\Sigma \Delta$ scheme \eqref{recurs2} with bounded input $\sup_{n} | f_n | \leq \beta < 1$, where $\beta = \frac{\alpha-\varepsilon}{1+\varepsilon}$.
If $u_0 = v_0 = 0$, then the state sequence $(u_n, v_n)$ remains bounded for all $n \in \mathbb{Z}$; in particular, $| v_n | \leq \frac{\varepsilon^2 \delta_L}{2\delta_H(\lambda-1)^2} + \frac{\delta_L}{8}$.
\end{thm}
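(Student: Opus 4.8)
The plan is to read Theorem \ref{thm2} as the explicit, optimized instance of the invariance machinery already assembled in Proposition \ref{Yilmaz}, Lemma \ref{invariant}, and Proposition \ref{prop4}. The heart of the matter is that Lemma \ref{invariant} (which keeps the image below $\Gamma_{B_1}$) together with Proposition \ref{prop4} (which keeps it above $\Gamma_{B_2}$) already yield $S_l(R_1)\subseteq R$ once $C$ and $\gamma$ satisfy \eqref{lowerc}, \eqref{gamma}, \eqref{cbound}, and \eqref{cond1}; by the symmetry of both $R$ and the map $S$ this forces $S_r(R_2)\subseteq R$, and since $S$ acts as $S_l$ on $R_1$ and as $S_r$ on $R_2$ we conclude $S(R)\subseteq R$, i.e. $R$ is positively invariant. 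Thus the entire proof reduces to (i) selecting the free constant $C$ and checking that the four structural conditions hold in the stated parameter window, (ii) propagating invariance along the orbit by induction, and (iii) reading off the numerical bound on $|v_n|$ from the geometry of $R$.

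For step (i) I would take $C$ equal to the largest value permitted by \eqref{cbound}, namely $C=\varepsilon^2\delta_L/\bigl(2\delta_H(\lambda-1)^2\bigr)$, so that \eqref{u0bound} holds with equality and $u_0=\sqrt{2C\delta_H\delta_L}=\varepsilon\delta_L/(\lambda-1)$. With this substitution each remaining hypothesis becomes a transparent scalar inequality. The lower bound \eqref{lowerc}, $C\ge 2\delta_H/\delta_L$, is equivalent to $\varepsilon\ge 2\delta_H(\lambda-1)/\delta_L=2(1+\alpha)(\lambda-1)/(1-\alpha)$, which is exactly the left endpoint of the admissible $\varepsilon$-interval; and demanding that this interval $[\,2(1+\alpha)(\lambda-1)/(1-\alpha),\,\alpha\,]$ be nonempty is precisely the hypothesis $\lambda\le 1+\alpha(1-\alpha)/\bigl(2(1+\alpha)\bigr)$. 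The lower half $\delta_H\le u_1$ of \eqref{cond1} likewise unwinds to $u_0\ge 2\delta_H$, i.e. to the very same lower bound on $\varepsilon$. For the multiplier, I would substitute $C$ into the lower endpoint $\delta_H/(C-\delta_H)$ of \eqref{gamma} and verify that it collapses exactly to $2\delta_H^2(\lambda-1)^2/\bigl(\varepsilon^2\delta_L-2\delta_H^2(\lambda-1)^2\bigr)$, the displayed lower bound on $\gamma$; the displayed upper bound on $\gamma$ must then be shown to be an admissible upper endpoint for the $\gamma$-range cut out by \eqref{gamma} and \eqref{cond1}, using that $\gamma$ is increasing in $u_1$ through the relation $\gamma=u_1/\bigl(C-\tfrac{1}{2\delta_H}u_1^2-\tfrac12 u_1\bigr)$.

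Granting (i), step (ii) is immediate: since $u_0=v_0=0$ and the origin lies in $R$ (because $B_2(0)=-C<0<C=B_1(0)$), positive invariance and induction give $(u_n,v_n)\in R$ for every $n\ge 1$, and the symmetric recursion run for $n\le-1$ gives the same there, so the whole orbit is trapped in the bounded convex set $R$. For step (iii) I would maximize $|v|$ over $R$: on $R$ one has $B_2(u)\le v\le B_1(u)$, and a one-line optimization of the piecewise-quadratic $B_1$ gives $\max_u B_1(u)=C+\delta_L/8$, attained at $u=\delta_L/2\in[-u_0,u_0]$, while the symmetry $B_2(u)=-B_1(-u)$ gives $\min_u B_2(u)=-(C+\delta_L/8)$. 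Hence $|v_n|\le C+\delta_L/8$, and inserting the chosen $C$ yields the claimed $|v_n|\le \varepsilon^2\delta_L/\bigl(2\delta_H(\lambda-1)^2\bigr)+\delta_L/8$.

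The main obstacle is the bookkeeping in step (i) around the $\gamma$-interval. While the lower endpoint falls out of \eqref{gamma} by direct substitution, disentangling the upper endpoint requires keeping careful track of which geometric requirement controls which side: ``staying below $\Gamma_{B_1}$'' constrains $\gamma$ only from below, whereas ``staying above $\Gamma_{B_2}$''—the genuine content of Proposition \ref{prop4} and condition \eqref{cond1}, now in the expanded $\lambda>1$ setting—supplies the binding upper constraint. Verifying that the displayed $\gamma$-range is simultaneously compatible with all four conditions, and genuinely nonempty (which is where the $\lambda$-bound re-enters through $u_0\ge 2\delta_H$), is where essentially all of the remaining algebra concentrates; everything else is either a direct substitution or the elementary convex-geometry estimate of step (iii).
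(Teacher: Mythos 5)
Your proposal follows essentially the same route as the paper's own proof: fix $C$ via the bounds \eqref{lowerc} and \eqref{cbound}, translate \eqref{cond1} into the nonemptiness conditions on $\varepsilon$ and $\lambda$, convert the $u_1$-range into the stated $\gamma$-range, invoke Proposition \ref{Yilmaz}, Lemma \ref{invariant}, and Proposition \ref{prop4} for positive invariance of $R$, and read off $|v_n|\le C+\delta_L/8$. The only place where you (like the paper) leave the algebra implicit is the identification of the displayed upper endpoint for $\gamma$ with the one obtained from \eqref{rangegamma} after substituting the maximal $C$, but the overall argument and its decomposition match.
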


\begin{proof}

\begin{enumerate}
 \item The bound $2\delta_H \leq u_0$ implied by \eqref{cond1} is equivalent to the condition $C \geq \frac{2\delta_H}{\delta_L}$,
 the lower bound \eqref{lowerc}. 
Combined with the upper bound \eqref{cbound}, 
%under which Proposition \ref{prop4} and Lemma \ref{invariant} hold:
 \begin{equation}
 \label{C2}
\frac{2\delta_H}{\delta_L} \leq C \leq \frac{\varepsilon^2 \delta_L}{2\delta_H(\lambda-1)^2}.
 \end{equation}  
The lower bound in this expression is smaller than the upper bound as long as
 \begin{equation}
 \label{eps2}
\frac{2\delta_H (\lambda - 1)}{\delta_L} \leq \varepsilon \leq \alpha,
 \end{equation} 
and this interval is nonempty only if
 \begin{equation}
 \label{lambda2}
 \lambda \leq 1 + \frac{\alpha(1-\alpha)}{2(1+\alpha)}.
 \end{equation}  
\item  Given $u_1$ and $C$, we can express $\gamma = u_1/B_1(-u_1)$ as
 \begin{equation}
 \label{gammaconst}
 \gamma = \frac{u_1}{C - (1/2)u_1 - 1/(2\delta_H) u_1^2}.
 \end{equation}
The range \eqref{cond1} for $u_1$ generates an admissible range for $\gamma$: 
 \begin{equation}
 \label{rangegamma}
\frac{\delta_H}{C - \delta_H} \leq \gamma \leq \frac{(2C\delta_H\delta_L)^{1/2} - \delta_H}{C\alpha+(C\delta_H\delta_L/2)^{1/2}}.
 \end{equation}
 \end{enumerate}
The bounds \eqref{C2} and \eqref{rangegamma} on $C$ and $\gamma$ respectively fall within the range of Proposition \ref{Yilmaz}, so we can apply Lemma \ref{invariant} as well as Proposition \ref{prop4} to conclude that if the parameters $(\alpha, \lambda, \varepsilon, C, \gamma)$ satisfy \eqref{C2}, \eqref{eps2}, \eqref{lambda2}, and \eqref{gammaconst}, and if $|f_n| \leq \beta = (\alpha-\varepsilon)/(1+\varepsilon)$, then $(u', v') = S(u,v) \in R$ if $(u,v) \in R$.  Theorem \ref{thm2} follows by maximizing the interval \eqref{rangegamma} for $\gamma$ using the bounds on $C$, and by noting that $\max_{(u,v) \in R} |v| \leq C + \frac{\delta_L}{8}$.
\end{proof}

\begin{rek}
Theorem \ref{thm1} is a special case of Theorem \ref{thm2}, obtained by setting $\varepsilon = \frac{2(1+\alpha)(\lambda-1)}{1-\alpha}$.
\end{rek}

\section{Discussion}
In this note we proved that second-order chaotic $\Sigma \Delta$ schemes \eqref{recurs2} are stable as long as $\lambda \geq 1$ is not too large and the dynamic range $\beta$ of the input is sufficiently small.  Our stability analysis can be extended to a more general setting and is presented in a limited scope for the sake of clarity.  For instance, our stability analysis also holds for tri-level quantizers in \eqref{recurs2} ; i.e. quantizers ${\cal Q}: \mathbb{R} \rightarrow \{-1,0,1\}$.  The motivation for using quantizers having a $0$-output state is to reduce power: in audio processing, over long stretches of input $f_n \approx 0$ (such as between songs on a recording), it is desirable that in response $q_n = 0$ so that the analog-to-digital converter can essentially ``shut off".  A so-called \emph{quiet} modification to the standard second-order scheme \eqref{mainrecurs} was introduced in \cite{ward2010} to have the property that if $f_n = 0$ over a sufficiently long stretch of time, then $q_n = 0$ in response.  This modification is at the same time guaranteed to retain second-order accuracy  of the standard scheme \eqref{standardacc}.   All of the stability analysis of this paper carries over to the quiet setting as well.

It remains to analyze the full two-parameter family of double-loop chaotic recursions \eqref{chaosrecurs} and to improve the stability theory to better match the empirical bounds (i.e., prove stability for a larger range of $\lambda$).  In addition to stability for this model, a rigorous study of the dynamics of the quantization output in the region of stability would be of interest.  In particular, it is still open whether the sequence of quantization output must be truly chaotic in the strict mathematical sense.

\section*{Acknowledgments}
We would like to thank Sinan G\"{u}nt\"{u}rk for valuable comments and improvements.  This work was supported in part by an NSF Mathematical Sciences Postdoctoral Reserach Fellowship.

\section{Appendix}
 In this appendix we show that for $\lambda_1$ and $\lambda_2$ sufficiently small, chaotic recursions of the form \eqref{chaosrecurs} really are second-order, in that they retain the second-order reconstruction accuracy with the sampling rate $T$ \eqref{standardacc} of the standard double-loop recursion.   The following proposition is adapted from \cite{OYstability}.

\begin{prop}
Let $f \in L^2(\mathbb{R})$, supp$(\widehat{f}) \in [-1/2, 1/2]$, $\| f \|_{L^{\infty}} \leq \beta < 1$, and consider samples $(f_n)_{n \in \mathbb{Z}} = \big(f(n/T)\big)_{n \in \mathbb{Z}}$ with $T > 1$.  Suppose that  $\lambda_1, \lambda_2$ in \eqref{recurs2} are such that $\lambda_1, \lambda_2 \leq 1 + 1/T$.  Suppose further that $|v_n| \leq v_{max}$.  Then
$$
\left| f(t) - \frac{1}{T} \sum_{n \in \mathbb{N}} q_n g\Big( t - \frac{n}{T} \Big) \right| \leq v_{max} \cdot C_g \cdot T^{-2}
$$
where $C_g$ is a constant depending on $g$, and $(q_n)_{n \in \mathbb{Z}}$ is the output produced from \eqref{recurs2}.
\end{prop}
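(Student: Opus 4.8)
The plan is to reduce the chaotic recursion to the \emph{exact} second-order difference relation underlying Proposition \ref{standardstable}, plus a small correction term, and then to show that the correction is itself $O(v_{max}\,T^{-2})$. The payoff is that the second-order reconstruction rate is inherited from the standard scheme, with the expansion contributing only a lower-order perturbation once $\lambda_1,\lambda_2$ are close enough to $1$.

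First I would eliminate $(u_n)$ in favor of $(v_n)$. From $u_n=\lambda_1 u_{n-1}+f_n-q_n$ and $v_n=\lambda_2 v_{n-1}+u_n$ one reads off $u_n=v_n-\lambda_2 v_{n-1}$ and $f_n-q_n=u_n-\lambda_1 u_{n-1}$; substituting gives the perturbed difference equation
\begin{equation*}
f_n - q_n = v_n - (\lambda_1+\lambda_2)\,v_{n-1} + \lambda_1\lambda_2\,v_{n-2}.
\end{equation*}
I would then split the right-hand side as $\Delta^2(v)_n + E_n$, where $E_n = (2-\lambda_1-\lambda_2)\,v_{n-1} + (\lambda_1\lambda_2-1)\,v_{n-2}$ measures the deviation from the standard second difference. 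Inserting $f(t)=\frac1T\sum_n f_n\,g(t-n/T)$ from \eqref{shannonsamp} together with $\widetilde f(t)=\frac1T\sum_n q_n\,g(t-n/T)$, the reconstruction error becomes $\frac1T\sum_n\bigl(\Delta^2(v)_n+E_n\bigr)g(t-n/T)$. The $\Delta^2(v)$ contribution is bounded by $v_{max}\,\|g''\|_{L^1}\,T^{-2}$ directly by Proposition \ref{standardstable} with $K=2$, so the whole burden falls on the $E_n$ term.

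The key algebraic observation, which I expect to be the crux, is that the two coefficients of $E_n$ nearly cancel: writing $a=2-\lambda_1-\lambda_2$ and $b=\lambda_1\lambda_2-1$, one checks $a+b=(1-\lambda_1)(1-\lambda_2)$. Under $1\le\lambda_i\le 1+1/T$ this yields $|a|\le 2/T$ but the much smaller $|a+b|\le 1/T^2$. Accordingly I would rewrite $E_n = a\,\Delta(v)_{n-1} + (a+b)\,v_{n-2}$. The second piece is handled crudely, since $|a+b|\,|v_{n-2}|\le v_{max}/T^2$ and $\frac1T\sum_n|g(t-n/T)|$ is a bounded Riemann sum (controlled by $\|g\|_{L^1}$ plus an $T^{-1}\|g'\|_{L^1}$ correction), so it contributes $O(v_{max}T^{-2})$. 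For the first piece I would apply summation by parts to move the difference from $v$ onto $g$, turning $\frac1T\sum_n\Delta(v)_{n-1}\,g(t-n/T)$ into $-\frac1T\sum_n v_{n-1}\bigl[g(t-(n+1)/T)-g(t-n/T)\bigr]$ (the boundary terms vanish by decay of $g$); the telescoping bound $\sum_n|g(t-(n+1)/T)-g(t-n/T)|\le\|g'\|_{L^1}$ then produces a factor $T^{-1}\|g'\|_{L^1}$, which combined with $|a|\le 2/T$ gives the remaining $O(v_{max}T^{-2})$ contribution.

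Collecting the three pieces yields the stated estimate with $C_g$ an explicit combination of $\|g\|_{L^1}$, $\|g'\|_{L^1}$, and $\|g''\|_{L^1}$. The one genuine subtlety is the decomposition of $E_n$: the naive bound $|E_n|\le v_{max}(|a|+|b|)=O(v_{max}/T)$ loses a full power of $T$ and only delivers first-order accuracy, so it is precisely the exact cancellation $a+b=(1-\lambda_1)(1-\lambda_2)$, spending the extra $T^{-1}$ on summation by parts, that lets the second-order rate survive the expansion.
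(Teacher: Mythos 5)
Your proof is correct and follows essentially the same route as the paper: both decompose $f_n-q_n$ into $\Delta^2(v)_n$ plus a first-difference term with an $O(1/T)$ coefficient plus a zeroth-order term whose coefficient factors as $(\lambda_1-1)(\lambda_2-1)=O(T^{-2})$, and bound the three pieces separately (the paper cites Proposition \ref{standardstable} with $K=1$ where you carry out the summation by parts explicitly, which is the same estimate). If anything your coefficient bookkeeping for the first-difference term ($2-\lambda_1-\lambda_2$ rather than the paper's $\lambda_1\lambda_2-1$) is the exact one, though the two differ only by $O(T^{-2})$ and yield the same bound.
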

\begin{proof}
Noting that $u_n = v_n - \lambda_2 v_{n-1}$,
\footnotesize{
\begin{eqnarray}
&& \left| f(t) - \frac{1}{T} \sum_{n \in \mathbb{Z}} q_n g\Big(t - \frac{n}{T}\Big) \right| =
\left| \frac{1}{T} \sum_{n \in \mathbb{Z}} (f_n - q_n) g\Big(t - \frac{n}{T}\Big) \right| \nonumber \\
&\leq& \left| \frac{1}{T} \sum_{n \in \mathbb{Z}} \Delta^2(v)_n g\Big(t - \frac{n}{T}\Big) \right| + (\lambda_1\lambda_2-1)\left| \frac{1}{T} \sum_{n \in \mathbb{Z}} \Delta^1(v)_{n-1} g\Big(t - \frac{n}{T}\Big) \right| \nonumber \\
&& \quad \quad \quad + (\lambda_1-1)(\lambda_2-1)\left| \frac{1}{T} \sum_{n \in \mathbb{Z}} v_{n-2} g\Big(t - \frac{n}{T}\Big) \right|  \nonumber 
\end{eqnarray}
}
\normalsize{
The first term is equivalent to the error of a stable second-order recursion $(\lambda_1 = \lambda_2 = 1)$ and is bounded by $T^{-2} \| v \|_{\infty} \| g^{(2)} \|_{L^1}$ by Proposition \ref{standardstable}.  The second term is similarly equivalent to the weighted error of  a standard first-order recursion;  since $| \lambda_1\lambda_2 - 1 | \leq 2/T$ it is bounded by $2T^{-2} \| v \|_{\infty} \| g^{(1)} \|_{L^1}$.  The final term is a `zeroth'-order recursion and is bounded by $T^{-2} \| v \|_{\infty} \| g \|_{L_1}$.}
\end{proof}

\bibliography{chaoticsd}
\bibliographystyle{abbrv}
\end{document}